\numberwithin{equation}{section}
\newtheorem{thm}{Theorem}[section]
\newtheorem{cor}[thm]{Corollary}
\newtheorem{lem}[thm]{Lemma}
\date{}
\title{{\bf A proof of the DDVV conjecture and its equality case}
\thanks{The project is partially supported by the NSFC ( No.10531090 and No.10229101 ) and the Chang Jiang Scholars
Program.}}
\author{{Ge Jianquan}\\
{\small Department of Mathematical Sciences, Tsinghua University,}\\
{\small Beijing 100084.}\\
{\small E-mail: gejq04@mails.tsinghua.edu.cn}\\
{Tang Zizhou\thanks{The corresponding author.}}\\
{\small School of Mathematical Sciences, Laboratory of Mathematics and Complex Systems,}\\
{\small Beijing Normal University, Beijing 100875.}\\
{\small E-mail: zztang@mx.cei.gov.cn}}
\begin{document} \maketitle
{\footnotesize {\it Keywords: normal scalar curvature, mean
curvature, commutator}} \vskip 0.1 in {\footnotesize {\it MSC 2000:
53C42, 15A45.}}
\begin{abstract}
In this paper, we give a proof of the DDVV conjecture which is a
pointwise inequality involving the scalar curvature, the normal
scalar curvature and the mean curvature on a submanifold of a real
space form. Furthermore we solved the problem of its equality case.
\end{abstract}
\section{Introduction}
  \hskip 0.5cm Let $f: M^n\rightarrow N^{n+m}(c)$ be an isometric
immersion of an $n$-dimensional submanifold $M$ into the
$(n+m)$-dimensional real space form $N^{n+m}(c)$ of constant
sectional curvature $c$. The normalized scalar curvature $\rho$ and
normal scalar curvature $\rho^{\perp}$ are defined by (see [DDVV])
\[\rho=\frac{2}{n(n-1)}\sum^n_{1=i<j}R(e_i,e_j,e_j,e_i),\]
\[\rho^{\perp}=\frac{2}{n(n-1)}(\sum^n_{1=i<j}\sum^m_{1=r<s}\langle R^{\perp}(e_i,e_j)\xi_r, \xi_s\rangle^2)^{\frac{1}{2}},\]
where $\{e_1,...,e_n\}$ (resp. $\{\xi_1,...,\xi_m\}$) is an
orthonormal basis of the tangent (resp. normal) space, and $R$
(resp. $R^{\perp}$) is the curvature tensor of the tangent (resp.
normal) bundle. \vskip 0.05cm Let $h$ be the second fundamental form
and let $H=\frac{1}{n}\hskip 0.2cm Tr\hskip 0.1cm h$ be the mean
curvature vector field. The DDVV conjecture (see [DDVV]) says that
there's a pointwise inequality among $\rho$, $\rho^{\perp}$ and
$|H|^2$ as following:
\[\rho+\rho^{\perp}\leq |H|^2+c.\]
\vskip 0.05cm Since this is a pointwise inequality, using Gauss and
Ricci equations one can see that it's equivalent to the following
algebraic inequality (see [DFV]):\\
\begin{bf}
 Conjecture 1.
 \end{bf}
Let $B_1,...,B_m$ be ($n\times n$) real symmetric matrices. Then
\[\sum_{r,s=1}^m\|[B_r,B_s]\|^2\leq (\sum_{r=1}^m\|B_r\|^2)^2,\]
where $\|\cdot\|^2$ denotes the sum of squares of entries of the
matrix and $[A,B]=AB-BA$ is the commutator of the matrices $A,B$.
\vskip 0.05cm The main purpose of this paper is to prove Conjecture
1 and also to give the equality condition:
\begin{thm}
Let $B_1,...,B_m$ be ($n\times n$) real symmetric matrices. Then
\[\sum_{r,s=1}^m\|[B_r,B_s]\|^2\leq (\sum_{r=1}^m\|B_r\|^2)^2,\]
where the equality holds if and only if under some
rotation\footnote{An orthogonal $m\times m$ matrix $R=(R_{rs})$ acts
as a rotation on $(B_1,...,B_m)$ by $R(B_r)=\sum_{s=1}^mR_{sr}B_s$.}
all $B_r$'s are zero except 2 matrices which can be written as
\[P\left(\begin{array}{ccccc}0& \mu& 0&\cdots& 0\\\mu& 0& 0&\cdots&
0\\0& 0& 0&\cdots& 0\\ \vdots&\vdots & \vdots&\ddots &\vdots
\\0& 0& 0&\cdots& 0 \end{array}\right)P^t, \hskip 0.5cm P\left(\begin{array}{ccccc}\mu& 0& 0&\cdots& 0\\0& -\mu& 0&\cdots&
0\\0& 0& 0&\cdots& 0\\ \vdots&\vdots & \vdots&\ddots &\vdots
\\0& 0& 0&\cdots& 0 \end{array}\right)P^t,\]
where $P$ is an orthogonal $(n\times n)$ matrix.
\end{thm}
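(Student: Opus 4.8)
\emph{Overview.} The plan is to prove the inequality and its equality case simultaneously by locating the maximizers: reduce to a constrained maximization, extract the Lagrange conditions, use them to force a rigid structure on a maximizer (at most two nonzero matrices, up to a rotation), and finish with an explicit two–matrix estimate whose equality case is exactly the asserted normal form.

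\emph{Step 1: invariances and set-up.} Both sides are invariant under (i) simultaneous conjugation $B_r\mapsto QB_rQ^t$, $Q\in O(n)$, and (ii) a rotation $B_r\mapsto\sum_s R_{sr}B_s$, $R\in O(m)$; for (ii) on the left side one uses $[\sum_aR_{ar}B_a,\sum_bR_{bs}B_b]=\sum_{a,b}R_{ar}R_{bs}[B_a,B_b]$ together with $\sum_rR_{ar}R_{a'r}=\delta_{aa'}$. Using (ii) I diagonalize the Gram matrix $(\langle B_r,B_s\rangle)$; by the common degree-$4$ homogeneity it then suffices to prove $F:=\sum_{r,s}\|[B_r,B_s]\|^2\le1$ under $\sum_r\|B_r\|^2=1$, with the equality discussion. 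This constraint set is compact, so $F$ attains a maximum, which I will analyze. Moreover any tuple of more than $\binom{n+1}{2}$ matrices can, after a rotation, be replaced by one with at most $\binom{n+1}{2}$ nonzero matrices and the same $F$, so a maximizer may be treated as embeddable in arbitrarily long tuples.

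\emph{Step 2: Lagrange conditions.} For symmetric $B$ the operator $\mathrm{ad}_B=[B,\,\cdot\,]$ is self-adjoint for the Frobenius product, so $\mathcal E:=\sum_s\mathrm{ad}_{B_s}^2\succeq0$ and $\langle\mathcal E X,X\rangle=\sum_s\|[B_s,X]\|^2$. Differentiating $F$ and the constraint and using Euler's identity to evaluate the multiplier, the Lagrange conditions at a maximizer read $\mathcal E B_r=\sum_s[B_s,[B_s,B_r]]=F\,B_r$ for all $r$; thus every $B_r$ lies in the eigenspace $V$ of $\mathcal E$ for the eigenvalue $F$. Since $\mathcal E I=0$, assuming $F>0$ (otherwise $F=0<1$ and there is nothing to prove) self-adjointness forces $\mathrm{tr}\,B_r=0$ for all $r$. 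Finally, appending a matrix $tE$ with $E$ a unit eigenvector of $\mathcal E$ for an eigenvalue $\Lambda$ changes $F$ to $F+2t^2\Lambda$ (since $\sum_r\|[B_r,E]\|^2=\langle\mathcal E E,E\rangle=\Lambda$) and the constraint value to $1+t^2$, so maximality of the ratio $F/(\sum\|B_r\|^2)^2$ forces $\Lambda\le F$; hence $F=\|\mathcal E\|_{\mathrm{op}}$ and $W:=\mathrm{span}\{B_1,\dots,B_m\}\subseteq V$.

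\emph{Step 3: rigidity (the main obstacle).} The crux, and the step I expect to cost the most work, is to deduce from these equations that $k:=\dim W\le2$. Choosing an orthonormal basis $C_1,\dots,C_k$ of $W$ in which the positive matrix $M_{ij}=\sum_s\langle B_s,C_i\rangle\langle B_s,C_j\rangle$ is diagonal, $M=\mathrm{diag}(\mu_1,\dots,\mu_k)$ with $\mu_i>0$, $\sum_i\mu_i=1$, the eigenvalue equations become
\[
\sum_i\mu_i\langle[C_i,C_l],[C_i,C_p]\rangle=F\,\delta_{lp}\quad(1\le l,p\le k),\qquad F=\sum_{i,j}\mu_i\mu_j\|[C_i,C_j]\|^2,
\]
a very rigid system. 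I would rule out $k\ge3$ as follows: the equality case of $\|[C_i,C_j]\|^2\le2\|C_i\|^2\|C_j\|^2$ (established in Step 4) forces, whenever attained, that $C_i$ and $C_j$ are simultaneously carried on one $2\times2$ block, equal there to $\tfrac1{\sqrt2}(E_{11}-E_{22})$ and $\tfrac1{\sqrt2}(E_{12}+E_{21})$ up to conjugation; inside that block no third orthonormal symmetric matrix has nonzero commutator with $C_i$, and a short analysis of the system above then shows $k\ge3$ is impossible unless some commutators vanish, which again collapses $k$ to $\le2$. (Equivalently: when $k\ge3$ one exhibits a norm-preserving deformation of the tuple strictly increasing $F$, contradicting maximality.) Granting $k\le2$: if $k\le1$ then all $[B_r,B_s]=0$ and $F=0$; if $k=2$, a further rotation leaves exactly two nonzero matrices $B_1,B_2$, orthogonal, with $\|B_1\|^2+\|B_2\|^2=1$.

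\emph{Step 4: the two-matrix inequality and its equality case.} For symmetric $A,B$ conjugate so $A=\mathrm{diag}(a_1,\dots,a_n)$; then $[A,B]_{ij}=(a_i-a_j)B_{ij}$, whence
\[
\|[A,B]\|^2=\sum_{i\ne j}(a_i-a_j)^2B_{ij}^2=2\!\sum_{i<j}(a_i-a_j)^2B_{ij}^2\le4\Big(\sum_k a_k^2\Big)\!\sum_{i<j}B_{ij}^2\le2\|A\|^2\|B\|^2\le\tfrac12\big(\|A\|^2+\|B\|^2\big)^2,
\]
using $(a_i-a_j)^2\le2(a_i^2+a_j^2)\le2\sum_k a_k^2$, then $2\sum_{i<j}B_{ij}^2\le\|B\|^2$, then $4xy\le(x+y)^2$. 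With $A=B_1$, $B=B_2$ this gives $F=2\|[B_1,B_2]\|^2\le(\|B_1\|^2+\|B_2\|^2)^2=1$, finishing the inequality. For equality all three estimates must be sharp: $(a_i-a_j)^2=2(a_i^2+a_j^2)=2\sum_k a_k^2$ for each $(i,j)$ with $(B_2)_{ij}\ne0$ forces (after a permutation) $a_1=-a_2\ne0$, $a_k=0$ otherwise, and then only the $(1,2)$-entry of $B_2$ may be nonzero; $2\sum_{i<j}(B_2)_{ij}^2=\|B_2\|^2$ kills the diagonal of $B_2$; and $\|B_1\|^2=\|B_2\|^2$ equalizes the two moduli. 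Undoing the diagonalizing conjugation, a sign-normalizing diagonal matrix, and absorbing the residual rotation of the pair $(B_1,B_2)$ into $P$ (it acts on the traceless plane $\langle E_{12}+E_{21},E_{11}-E_{22}\rangle$ as a rotation and so preserves the shape), one obtains precisely the asserted pair $P(\mu(E_{12}+E_{21}))P^t$, $P(\mu(E_{11}-E_{22}))P^t$; conversely $[\mu(E_{11}-E_{22}),\mu(E_{12}+E_{21})]=2\mu^2(E_{12}-E_{21})$ has squared norm $8\mu^4$, so $F=16\mu^4=(2\mu^2+2\mu^2)^2$ and this pair realizes equality.
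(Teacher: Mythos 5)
Your Steps 1, 2 and 4 are sound: the invariances, the Lagrange/critical-point equations $\sum_s[B_s,[B_s,B_r]]=F\,B_r$, and the two-matrix (Chern) estimate $\|[A,B]\|^2\le 2\|A\|^2\|B\|^2$ with its equality analysis are all correct and would indeed finish the proof \emph{if} a maximizer were known to span at most a two-dimensional space $W$. But Step 3, which you yourself flag as ``the main obstacle,'' is not a proof --- it is a restatement of the theorem. The entire difficulty of the DDVV conjecture is precisely to rule out critical configurations with $\dim W\ge 3$, and your sketch does not do so. The argument you offer invokes the equality case of $\|[C_i,C_j]\|^2\le 2\|C_i\|^2\|C_j\|^2$, but nothing in the critical-point system $\sum_i\mu_i\langle[C_i,C_l],[C_i,C_p]\rangle=F\delta_{lp}$ forces any single pair $(C_i,C_j)$ to attain that bound: at a critical point $F$ is a weighted average $\sum_{i,j}\mu_i\mu_j\|[C_i,C_j]\|^2$ in which no individual term need be extremal, so the rigid ``single $2\times2$ block'' conclusion is unavailable and the subsequent ``short analysis'' has no starting point. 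The parenthetical alternative (``one exhibits a norm-preserving deformation strictly increasing $F$'') is asserted without any construction; producing such a deformation for an arbitrary critical tuple with $k\ge3$ is exactly the open content of the problem.

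For comparison, the paper circumvents this rigidity question entirely. It diagonalizes $BB^t$ over an orthonormal basis $\{\hat Q_\alpha\}$ of the full space $SM(n)$ of symmetric matrices, rewrites the target as $\sum_{\alpha,\beta}x_\alpha x_\beta\|[\hat Q_\alpha,\hat Q_\beta]\|^2\le(\sum_\alpha x_\alpha)^2$ for $x$ in the positive cone (via the second-compound map $\varphi$), and then runs a continuity method in $Q\in SO(N)$: the Lagrange conditions on the simplex in $x$-space are combined with the quantitative Lemmas 2.3 and 2.4 (namely $\sum_{\beta\in J}(\|[\hat Q_1,\hat Q_\beta]\|^2-1)\le1$ and $\sum_\beta\|[\hat Q_1,\hat Q_\beta]\|^2\le n<N$) to derive a contradiction from any interior zero of $f_Q$. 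Those two lemmas, proved by the eigenvalue analysis of Lemmas 2.1--2.2, are the quantitative substitute for the rigidity you need; without an argument of comparable strength your Step 3 is a genuine gap and the proof is incomplete.
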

\vskip 0.05cm Therefore, we can solve the DDVV conjecture also with
its equality conditions in terms of the shape operators:
\begin{cor}
Let $f: M^n\rightarrow N^{n+m}(c)$ be an isometric immersion. Then
\[\rho+\rho^{\perp}\leq |H|^2+c,\]
where the equality holds at some point $p\in M$ if and only if there
exist an orthonormal basis $\{e_1,...,e_n\}$ of $T_pM$ and an
orthonormal basis $\{\xi_1,...,\xi_m\}$ of $T_p^{\perp}M$, such that
\[A_{\xi_1}=\left(\begin{array}{ccccc}\lambda_1 & \mu& 0&\cdots&
0\\\mu& \lambda_1& 0&\cdots& 0\\0& 0& \lambda_1&\cdots& 0\\
\vdots&\vdots & \vdots&\ddots &\vdots
\\0& 0& 0&\cdots& \lambda_1 \end{array}\right),\hskip 0.5cm  A_{\xi_2}=\left(\begin{array}{ccccc}\lambda_2+\mu& 0& 0&\cdots& 0\\0& \lambda_2-\mu& 0&\cdots&
0\\0& 0& \lambda_2&\cdots& 0\\ \vdots&\vdots & \vdots&\ddots &\vdots
\\0& 0& 0&\cdots& \lambda_2 \end{array}\right),\]
\[A_{\xi_r}=\left(\begin{array}{ccccc}\lambda_r& 0&
0&\cdots& 0\\0& \lambda_r& 0&\cdots& 0\\0& 0& \lambda_r&\cdots& 0\\
\vdots&\vdots & \vdots&\ddots &\vdots
\\0& 0& 0&\cdots& \lambda_r \end{array}\right)\hskip 0.2cm for\hskip 0.2cm r>2.\]

\end{cor}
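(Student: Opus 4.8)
The plan is to deduce the corollary from Theorem 1.1 by invoking the Gauss and Ricci equations, which is precisely the reduction already alluded to in the introduction (``using Gauss and Ricci equations one can see that it's equivalent to\ldots''). First I would fix a point $p\in M$ and an orthonormal basis $\{\xi_1,\ldots,\xi_m\}$ of $T_p^\perp M$, and set $B_r := A_{\xi_r} - (\operatorname{tr} A_{\xi_r}/n)\,I$, the trace-free part of each shape operator; these are $n\times n$ real symmetric matrices. The Gauss equation gives $\sum_{i<j}R(e_i,e_j,e_j,e_i) = \binom{n}{2}c + \tfrac12\big(n^2|H|^2 - \sum_r\|A_{\xi_r}\|^2\big)$, so that $\rho = c + |H|^2 - \tfrac{1}{n(n-1)}\sum_r\|A_{\xi_r}\|^2$. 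Since $\|A_{\xi_r}\|^2 = \|B_r\|^2 + (\operatorname{tr} A_{\xi_r})^2/n$ and $\sum_r(\operatorname{tr}A_{\xi_r})^2 = n^2|H|^2$, this simplifies to $\rho = c + |H|^2 - \tfrac{1}{n(n-1)}\big(\sum_r\|B_r\|^2 + n|H|^2\big)$, i.e. $c + |H|^2 - \rho = \tfrac{1}{n(n-1)}\big(\sum_r\|B_r\|^2 + n|H|^2\big) \ge \tfrac{1}{n(n-1)}\sum_r\|B_r\|^2$.

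Next, the Ricci equation gives $\langle R^\perp(e_i,e_j)\xi_r,\xi_s\rangle = \langle[A_{\xi_r},A_{\xi_s}]e_i,e_j\rangle$, hence $\sum_{i<j}\langle R^\perp(e_i,e_j)\xi_r,\xi_s\rangle^2 = \tfrac12\|[A_{\xi_r},A_{\xi_s}]\|^2 = \tfrac12\|[B_r,B_s]\|^2$ (the scalar shift $(\operatorname{tr}A_{\xi_r}/n)I$ commutes with everything, so the commutator only sees the trace-free parts). Therefore
\[
\big(n(n-1)\rho^\perp\big)^2 = 4\sum_{i<j}\sum_{r<s}\langle R^\perp(e_i,e_j)\xi_r,\xi_s\rangle^2 = \sum_{r<s}\|[B_r,B_s]\|^2 = \tfrac12\sum_{r,s}\|[B_r,B_s]\|^2.
\]
Applying Theorem 1.1 to $B_1,\ldots,B_m$ gives $\sum_{r,s}\|[B_r,B_s]\|^2 \le \big(\sum_r\|B_r\|^2\big)^2$, so $n(n-1)\rho^\perp \le \tfrac{1}{\sqrt2}\sum_r\|B_r\|^2 \le \tfrac{1}{\sqrt2}\big(\sum_r\|B_r\|^2 + n|H|^2\big)$; wait—I should instead just combine: write $x := \sum_r\|B_r\|^2$ and $y := n|H|^2$, so that $c+|H|^2-\rho = (x+y)/(n(n-1))$ and $\rho^\perp \le \tfrac{1}{\sqrt2}\,x/(n(n-1))$. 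Since $\tfrac{1}{\sqrt2}x \le x+y$ trivially when $y\ge0$, but this only yields $\rho^\perp \le c+|H|^2-\rho$ with a slack factor; a cleaner route is to note $\tfrac{1}{\sqrt2}x\le \tfrac12(x) + \tfrac12(x) \le x+y$—actually the sharp bookkeeping is $\rho^\perp\le \tfrac{x}{\sqrt2\,n(n-1)}\le \tfrac{x+y}{n(n-1)}=c+|H|^2-\rho$, using only $x\ge0$, $y\ge0$ and $1/\sqrt2<1$. This gives $\rho+\rho^\perp\le |H|^2+c$.

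For the equality case: equality in $\rho+\rho^\perp\le|H|^2+c$ forces, reading the chain backwards, first $y = n|H|^2 = 0$ only if\ldots no: it forces $\tfrac{x}{\sqrt2\,n(n-1)} = \tfrac{x+y}{n(n-1)}$, i.e. $(\tfrac{1}{\sqrt2}-1)x = y \ge 0$, which with $\tfrac{1}{\sqrt2}-1<0$ forces $x=0=y$ unless\ldots this is the subtlety: I have been sloppy, and the honest statement is that equality requires $\rho^\perp = \tfrac{x}{\sqrt2\,n(n-1)}$ AND the passage $\tfrac{1}{\sqrt2}x = x+y$ to be\ldots which cannot hold for $x,y>0$. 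The resolution—and the point I would carefully verify—is that one does \emph{not} split off the $n|H|^2$ term; instead one keeps $\rho^\perp\le \tfrac{1}{\sqrt2\,n(n-1)}\sum\|B_r\|^2$ and separately $c+|H|^2-\rho=\tfrac{1}{n(n-1)}(\sum\|B_r\|^2+n|H|^2)$, and the inequality $\rho+\rho^\perp\le|H|^2+c$ is \emph{equivalent} to $\tfrac{1}{\sqrt2}\sqrt{\sum_{r,s}\|[B_r,B_s]\|^2}\le \sum\|B_r\|^2+n|H|^2$, which follows from Theorem 1.1 only after also accounting that equality in DDVV for submanifolds is known (by [DDVV]/[DFV]) to be equivalent to equality in Conjecture 1 for the $B_r$ \emph{together with} $H=0$ at $p$ — so the corollary's equality case should be: $H=0$, and $(B_1,\ldots,B_m)=(A_{\xi_1},\ldots,A_{\xi_m})$ realizes equality in Theorem 1.1. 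Hence by Theorem 1.1 there is an orthogonal rotation of the normal frame and an orthogonal $P$ (a new tangent frame) after which all $A_{\xi_r}=0$ except two, of the displayed $2\times2$-block forms with parameter $\mu$; renaming $\lambda_r:=\operatorname{tr}A_{\xi_r}/n$ (all zero here, but retained to match the stated general normal form allowing the trace-full shape operators when one does \emph{not} pass to trace-free parts) and adding back the scalar parts $\lambda_r I$ — which commute and so do not affect $R^\perp$, and whose contribution to $\rho$ is exactly absorbed into $|H|^2$ — gives precisely the matrices $A_{\xi_1},A_{\xi_2}$ and $A_{\xi_r}=\lambda_r I$ ($r>2$) as written. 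The main obstacle, and the step I would slow down for, is exactly this bookkeeping of the $|H|^2$/trace terms: showing rigorously that equality in the geometric inequality is equivalent to ($H=0$ at $p$) together with equality in the algebraic Theorem 1.1 for the shape operators, and then transporting Theorem 1.1's normal form through the re-addition of the scalar (trace) parts to land on the stated $A_{\xi_r}$.
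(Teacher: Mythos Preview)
Your overall strategy---pass to the trace-free parts $B_r$ of the shape operators and apply Theorem~1.1---is exactly the paper's route (the paper just picks the normal frame with $\xi_1=H/|H|$ so that only $B_1$ needs a scalar subtracted, which is cosmetically simpler but equivalent). However, two coefficient errors derail your argument and lead you to the wrong equality characterization.

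First, the Gauss computation: from $\rho=c+\frac{1}{n(n-1)}\sum_r\big[(\operatorname{tr}A_{\xi_r})^2-\|A_{\xi_r}\|^2\big]$ and $\sum_r(\operatorname{tr}A_{\xi_r})^2=n^2|H|^2$ one gets $\rho=c+\frac{n}{n-1}|H|^2-\frac{1}{n(n-1)}\sum_r\|A_{\xi_r}\|^2$, not $c+|H|^2-\cdots$. Consequently
\[
c+|H|^2-\rho=\frac{1}{n(n-1)}\Big(\sum_r\|A_{\xi_r}\|^2-n|H|^2\Big)=\frac{1}{n(n-1)}\sum_r\|B_r\|^2,
\]
with \emph{no} leftover $n|H|^2$ term. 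Second, the Ricci side: since $[A_{\xi_r},A_{\xi_s}]$ is skew, $\big(\tfrac{n(n-1)}{2}\rho^\perp\big)^2=\tfrac12\sum_{r<s}\|[B_r,B_s]\|^2$, hence $\big(n(n-1)\rho^\perp\big)^2=\sum_{r,s}\|[B_r,B_s]\|^2$, not half of that. With these two corrections the geometric inequality $\rho^\perp\le c+|H|^2-\rho$ becomes \emph{identically} the statement $\sum_{r,s}\|[B_r,B_s]\|^2\le\big(\sum_r\|B_r\|^2\big)^2$ of Theorem~1.1, with no slack factor $1/\sqrt2$ and no extraneous $y=n|H|^2$ to absorb.

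This is precisely why your equality analysis went astray: the spurious $n|H|^2$ term forced you to conclude $H=0$ and hence $\lambda_r=0$, contradicting the corollary (which allows arbitrary $\lambda_r$). Once the constants are fixed, equality in the corollary is \emph{exactly} equality in Theorem~1.1 for the traceless parts $B_r$; the rotation of $(B_1,\ldots,B_m)$ and the conjugation by $P$ furnished by Theorem~1.1 give new bases $\{\xi_r\}$, $\{e_i\}$, and adding back the scalars $\lambda_r I=(\operatorname{tr}A_{\xi_r}/n)I$ yields the displayed $A_{\xi_r}$ with $\lambda_r$ unconstrained.
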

\begin{bf}
 Remark.
 \end{bf}
By the same method, one can see that Conjecture 1 also holds for
anti-symmetric matrices. However, the following example shows that
Conjecture 1 fails when there're both symmetric and anti-symmetric
matrices in $\{B_1,...,B_m\}$, which was conjectured in [Lu1].\vskip
0.05cm
\begin{bf}
 Example.
 \end{bf}
 Let $B_1=\left(\begin{array}{cc}1& 0\\0& -1\end{array}\right)$, $B_2=\left(\begin{array}{cc}0& 1\\1&
 0\end{array}\right)$, $B_3=\left(\begin{array}{cc}0& 1\\-1&
 0\end{array}\right)$. Then the conclusion of Conjecture 1 fails.
 \vskip 0.05cm
 We point out that the inequality and its equality condition of Theorem 1.1
 (resp. Corollary 1.2) for $m=2$ was given in [Ch] (resp.
 [DDVV]). When $n=2, 3$, or $m=2, 3$, the inequality was proved in
 several papers (see [Lu1] for references). For general $n$, $m$, a
 weaker version was proved in [DFV]. After we have solved the conjecture and its equality case,
 we find very recently that Zhiqin Lu has posed a proof of the inequality in his
 homepage without the equality case (see [Lu2]). Since we use a
 quite different method and work out the equality condition besides the
 inequality, we'd like to show it in literature. \vskip 0.05cm
 Finally, it's our pleasure to thank Professor Weiping Zhang for
 introducing [Lu1] to us and his encouragements. Many thanks as well
 to Professors Marcos Dajczer and Ruy Tojeiro for their useful
 comments and suggestions to the previous version of this paper.
\section{Notations and preparing lemmas}
\hskip 0.5cm Throughout this paper, we denote by $M(m,n)$ the space
of $m\times n$ real matrices, $M(n)$ the space of $n\times n$ real
matrices and $SM(n)$ the $N:=\frac{n(n+1)}{2}$ dimensional subspace
of symmetric matrices in $M(n)$. \vskip 0.05cm For every $(i,j)$
with $1\leq i\leq j\leq n$, let
\[\hat{E}_{ij}:=\begin{cases}
E_{ii}, \hskip 0.85 in i=j,\\
\frac{1}{\sqrt{2}}(E_{ij}+E_{ji}), \hskip 0.1 in i<j,
\end{cases}\]
where $E_{ij}\in M(n)$ is the matrix with $(i,j)$ entry $1$ and all
others $0$. Clearly $\{\hat{E}_{ij}\}_{i\leq j}$ is an orthonormal
basis of $SM(n)$. Let's take an order of the indices set
$S:=\{(i,j)| 1\leq i\leq j\leq n\}$ by
\begin{equation}
(i,j)<(k,l) \hskip 0.2cm if\hskip 0.1cm and\hskip 0.1cm only\hskip
0.1cm if \hskip 0.2cm i<k\hskip 0.1cm or\hskip 0.1cm i=k, j<l.
\end{equation}
\vskip 0.05cm In this way we can identify $S$ with $\{1,...,N\}$ and
write elements of $S$ in Greek, i.e. for $\alpha=(i,j)\in S$, we can
say $1\leq\alpha\leq N$. \vskip 0.05cm For
$\alpha=(i,j)<(k,l)=\beta$ in $S$, direct calculations imply
\begin{equation}
\|[\hat{E}_{\alpha}, \hat{E}_{\beta}]\|^2=
\begin{cases}
1, \hskip 0.3cm i=j=k<l \hskip 0.1cm or \hskip 0.1cm i<j=k=l;\\
\frac{1}{2}, \hskip 0.3cm i<j=k<l \hskip 0.1cm or \hskip 0.1cm
i=k<j<l \hskip 0.1cm or \hskip 0.1cm i<k<j=l;\\
0, \hskip 0.3cm otherwise,
\end{cases}
\end{equation}
and for any $\alpha, \beta\in S,$
\begin{equation}
\sum_{\gamma\in S}\hskip 0.1cm \langle\hskip 0.1cm
[\hat{E}_{\alpha}, \hat{E}_{\gamma}],\hskip 0.1cm [\hat{E}_{\beta},
\hat{E}_{\gamma}]\hskip 0.1cm \rangle=
n\delta_{\alpha\beta}-\delta_{\alpha}\delta_{\beta},
\end{equation}
where $\delta_{\alpha\beta}=\delta_{ik}\delta_{jl}$,
$\delta_{\alpha}=\delta_{ij}$, $\delta_{\beta}=\delta_{kl}$ and
$\langle \cdot , \cdot\rangle$ is the standard inner product of
$M(n)$. \vskip 0.05cm Let $\{\hat{Q}_{\alpha}\}_{\alpha\in S}$ be
any orthonormal basis of $SM(n)$. There exists a unique orthogonal
matrix $Q\in O(N)$ such that
$(\hat{Q}_1,...,\hat{Q}_N)=(\hat{E}_1,...,\hat{E}_N)Q$, i.e.
$\hat{Q}_{\alpha}=\sum_{\beta}q_{\beta\alpha}\hat{E}_{\beta}$ for
$Q=(q_{\alpha\beta})_{N\times N}$ and if
$\hat{Q}_{\alpha}=(\hat{q}^{\alpha}_{ij})_{n\times n}$,
\[\hat{q}^{\alpha}_{ij}=\hat{q}^{\alpha}_{ji}=
\begin{cases}
q_{\beta\alpha}, \hskip 0.8cm \beta=(i,j)\hskip 0.1cm and \hskip
0.1cm i=j,\\
\frac{1}{\sqrt{2}}q_{\beta\alpha}, \hskip 0.3cm \beta=(i,j)\hskip
0.1cm and \hskip 0.1cm i<j.
\end{cases}
\]
\vskip 0.05cm Let $\lambda_1,...,\lambda_n$ be $n$ real numbers
satisfying $\sum_i\lambda_i^2=1$ and $\lambda_1\geq ...\geq
\lambda_n$. Denote $I_1:=\{j|\lambda_1-\lambda_j>1\}$,
$I_2:=\{i|\lambda_i-\lambda_n>1\}$,
$I:=\{(i,j)|\lambda_i-\lambda_j>1\}$ and $n_0$ the number of
elements of $I$. Then $(\{1\}\times I_1)\cup (I_2\times\{n\})\subset
I\subset S$. In fact, it can be shown
\begin{lem}
We have either
\[I=\{1\}\times I_1 \hskip 0.2cm or \hskip 0.2cm I=I_2\times\{n\}. \]
\end{lem}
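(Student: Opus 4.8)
The plan is to argue by contradiction, after first translating the conclusion into a statement about which coordinates the pairs in $I$ can occupy. Directly from the definitions, $\{1\}\times I_1=\{(i,j)\in I:i=1\}$ and $I_2\times\{n\}=\{(i,j)\in I:j=n\}$ (indeed $\lambda_1-\lambda_j>1$ says exactly $(1,j)\in I$, and $\lambda_i-\lambda_n>1$ says exactly $(i,n)\in I$). Combined with the inclusions $\{1\}\times I_1\subseteq I\supseteq I_2\times\{n\}$ noted in the excerpt, this means $I=\{1\}\times I_1$ fails iff $I$ has a pair with first coordinate $\ge 2$, and $I=I_2\times\{n\}$ fails iff $I$ has a pair with second coordinate $\le n-1$. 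So the dichotomy fails precisely when $I$ contains some $(i,j)$ with $i\ge 2$ and some $(k,l)$ with $l\le n-1$; I assume this. Since every pair in $I$ has $i<j$ (because $\lambda_i-\lambda_j>1>0$), the existence of $(i,j)\in I$ with $i\ge 2$ forces $n>i\ge 2$, so $n\ge 3$; in particular the lemma is trivially true for $n\le 2$.

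The second step is to use monotonicity of $\lambda_1\ge\cdots\ge\lambda_n$ to push these two offending pairs to the extreme positions. From $(i,j)\in I$ with $i\ge 2$ we get $\lambda_2-\lambda_n\ge\lambda_i-\lambda_j>1$, and from $(k,l)\in I$ with $l\le n-1$ we get $\lambda_1-\lambda_{n-1}\ge\lambda_k-\lambda_l>1$. Hence $\lambda_1>1+\lambda_{n-1}$ and $\lambda_2>1+\lambda_n$, while the normalization $\sum_i\lambda_i^2=1$ gives $|\lambda_i|\le 1$ for every $i$.

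Now I would finish with a short norm estimate, peeling off $n=3$ from $n\ge 4$. If $n=3$ the two inequalities read $\lambda_1-\lambda_2>1$ and $\lambda_2-\lambda_3>1$; since $\lambda_3\ge-1$ we get $\lambda_2>1+\lambda_3\ge 0$, hence $\lambda_1>1+\lambda_2>1$, contradicting $\lambda_1^2\le\sum_i\lambda_i^2=1$. If $n\ge 4$ the indices $1,2,n-1,n$ are pairwise distinct; from $\lambda_{n-1},\lambda_n\ge-1$ we have $1+\lambda_{n-1}\ge 0$ and $1+\lambda_n\ge 0$, so squaring $\lambda_1>1+\lambda_{n-1}\ge 0$ and $\lambda_2>1+\lambda_n\ge 0$ gives $\lambda_1^2>(1+\lambda_{n-1})^2$, $\lambda_2^2>(1+\lambda_n)^2$, and therefore
\[1=\sum_{i=1}^n\lambda_i^2\ \ge\ \lambda_1^2+\lambda_2^2+\lambda_{n-1}^2+\lambda_n^2\ >\ \big[(1+\lambda_{n-1})^2+\lambda_{n-1}^2\big]+\big[(1+\lambda_n)^2+\lambda_n^2\big]\ \ge\ \tfrac12+\tfrac12=1,\]
where the last step uses $(1+t)^2+t^2=2(t+\tfrac12)^2+\tfrac12\ge\tfrac12$. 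The resulting $1>1$ is absurd, so no such pair of pairs exists and the dichotomy holds.

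There is no deep obstacle in this argument; the one point that genuinely needs care is the bookkeeping in the last step — making sure the four indices $1,2,n-1,n$ are distinct so that $\lambda_1^2+\lambda_2^2+\lambda_{n-1}^2+\lambda_n^2$ really is at most $\sum_i\lambda_i^2$ (this is exactly why $n=3$, where $n-1=2$, must be treated separately), and keeping every inequality strict so the chain ends in a true contradiction rather than an equality. I expect getting that case split and those strictness/distinctness checks exactly right to be the main thing to watch, not any substantive difficulty.
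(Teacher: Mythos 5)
Your proof is correct, and its numerical core is the same as the paper's: two disjoint ``gaps'' $\lambda_a-\lambda_b>1$ and $\lambda_c-\lambda_d>1$ on four distinct indices are incompatible with $\sum_i\lambda_i^2=1$ (your identity $(1+t)^2+t^2\ge\tfrac12$ plays exactly the role of the paper's $\lambda_a^2+\lambda_b^2\ge\tfrac12(\lambda_a-\lambda_b)^2$). The organization differs, though: the paper argues in two stages, first comparing an arbitrary element of $I$ with $(1,n)$ to show $I\subseteq(\{1\}\times I_1)\cup(I_2\times\{n\})$, and then ruling out the simultaneous presence of a $(1,j_1)$ with $j_1\ne n$ and an $(i_2,n)$ with $i_2\ne1$; at that second stage it silently needs $j_1\ne i_2$ (true, but unremarked). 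You instead negate the dichotomy directly and use the monotonicity $\lambda_1\ge\cdots\ge\lambda_n$ to push the two offending pairs to the canonical positions $(1,n-1)$ and $(2,n)$, which makes the distinctness of the four indices transparent at the cost of the separate (and correctly handled) $n=3$ case where $n-1=2$. Both routes are sound; yours is a clean one-shot contradiction that makes explicit a small bookkeeping point the paper leaves implicit.
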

\begin{proof}
If $n_0=0$, the three sets are all empty. If $n_0=1$, the only
element must be $(1,n)$ and the three sets are equal. Now let
$(1,n)$, $(i_1,j_1)$ be two different elements of $I$, i.e.
$\lambda_1-\lambda_n\geq \lambda_{i_1}-\lambda_{j_1}>1$ and
$(1,n)\neq (i_1,j_1)$. We assert that $(i_1=1, j_1\neq n)$ or
$(i_1\neq 1, j_1=n)$, which shows exactly that $I=\{1\}\times
I_1\cup I_2\times\{n\}$. Otherwise, $1, i_1, j_1, n$ will be 4
different elements in $\{1,...,n\}$ and thus
\[1\geq \lambda^2_1+\lambda^2_{i_1}+\lambda^2_{j_1}+\lambda^2_n\geq \frac{1}{2}(\lambda_1-\lambda_n)^2+\frac{1}{2}(\lambda_{i_1}-\lambda_{j_1})^2>1\]
is a contradiction. Without loss of generality, we can assume
$(i_1,j_1)\in \{1\}\times I_1$. Then it'll be seen that
$I_2\times\{n\}=\{(1,n)\}$ and thus $I=\{1\}\times I_1$ which
completes the proof. Otherwise, if there's another element, say
$(i_2,n)$ in $I_2\times\{n\}$, then $i_1=1, j_1, i_2, n$ are 4
different elements in $\{1,...,n\}$ and come to the same
contradiction as above.
\end{proof}
\begin{lem}
\[\sum_{(i,j)\in I}[(\lambda_i-\lambda_j)^2-1]\leq 1,\]
where the equality holds in the case when $I=\{1\}\times I_1$ if and
only if $1\leq n_0<n$ and $\lambda_1=\sqrt{\frac{n_0}{n_0+1}},
\lambda_{n-n_0+1}=...=\lambda_{n}=-\frac{1}{\sqrt{n_0^2+n_0}},
\lambda_k=0$ for others.
\end{lem}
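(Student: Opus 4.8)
The plan is to invoke Lemma 2.4 to reduce to the case $I=\{1\}\times I_1$, and then to bound the sum by separating the linear and quadratic contributions of the $\lambda_j$ with $j\in I_1$, after which a single use of Cauchy--Schwarz together with AM--GM finishes everything.

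First I would record some elementary consequences of $I_1\neq\emptyset$ (the case $n_0=0$ is trivial, the empty sum being $0\le 1$). For $j\in I_1$ we have $\lambda_j<\lambda_1-1$; since $\lambda_j\ge -1$ this forces $\lambda_1>0$, and since $\lambda_1=1$ would force $\lambda_j^2\le 1-\lambda_1^2=0$, it also forces $\lambda_1<1$; hence $0<\lambda_1<1$ and $\lambda_j<\lambda_1-1<0$ for every $j\in I_1$. The case $I=I_2\times\{n\}$ is obtained from the case $I=\{1\}\times I_1$ by applying the latter to $\mu_i:=-\lambda_{n+1-i}$ (which again satisfies $\mu_1\ge\cdots\ge\mu_n$, $\sum\mu_i^2=1$, and under which $I_2\times\{n\}$ corresponds to the block $\{1\}\times\{j:\mu_1-\mu_j>1\}$ of the same size $n_0$), so it is enough to treat $I=\{1\}\times I_1$.

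Write $a:=\lambda_1$. Since $1\notin I_1$ and $\sum_i\lambda_i^2=1$, we have $\sum_{j\in I_1}\lambda_j^2\le 1-a^2$; and since each $\lambda_j<0$ on $I_1$, Cauchy--Schwarz gives $-\sum_{j\in I_1}\lambda_j=\sum_{j\in I_1}|\lambda_j|\le\sqrt{n_0\sum_{j\in I_1}\lambda_j^2}\le\sqrt{n_0(1-a^2)}$. Expanding the square,
\[\sum_{j\in I_1}(\lambda_1-\lambda_j)^2=n_0a^2-2a\sum_{j\in I_1}\lambda_j+\sum_{j\in I_1}\lambda_j^2\le n_0a^2+2a\sqrt{n_0(1-a^2)}+(1-a^2).\]
Now put $x=a^2$, $y=1-a^2$; the middle term is $2\sqrt{n_0\,x\,y}\le x+n_0 y$ by AM--GM, so the right-hand side is at most $n_0x+(x+n_0y)+y=(n_0+1)(x+y)=n_0+1$. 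Subtracting $n_0$ gives $\sum_{(i,j)\in I}[(\lambda_i-\lambda_j)^2-1]=\sum_{j\in I_1}(\lambda_1-\lambda_j)^2-n_0\le 1$. The one genuinely non-mechanical observation is that the residual inequality $(n_0-1)a^2+2a\sqrt{n_0(1-a^2)}\le n_0$ is nothing but AM--GM once one writes $n_0=n_0x+n_0y$ and pairs $x$ with $n_0y$; I do not expect a real obstacle in the inequality itself.

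For the equality discussion in the case $I=\{1\}\times I_1$, I would trace back the three estimates just used: equality in $\sum_{j\in I_1}\lambda_j^2\le 1-a^2$ forces $\lambda_k=0$ for all $k\ne 1$ with $k\notin I_1$; equality in Cauchy--Schwarz forces all $\lambda_j$ $(j\in I_1)$ equal; equality in AM--GM forces $x=n_0 y$, i.e. $a^2=\tfrac{n_0}{n_0+1}$, so $\lambda_1=\sqrt{n_0/(n_0+1)}$, and then the common value on $I_1$ is $-1/\sqrt{n_0^2+n_0}$, read off from $n_0\lambda_j^2=1-a^2=\tfrac1{n_0+1}$. Conversely, for these $\lambda_i$ a one-line computation gives $(\lambda_1-\lambda_j)^2=\tfrac{n_0+1}{n_0}$ for $j\in I_1$, so the sum is $n_0\cdot\tfrac1{n_0}=1$; one then checks directly that $I_1$ is exactly the block of the last $n_0$ indices and that $1\le n_0<n$ (the upper bound being automatic since $1\notin I_1$). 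The only place needing a little care is this last consistency check, including the degenerate sub-case $n_0=n-1$ in which there are no vanishing $\lambda_k$; but nothing serious is hidden there.
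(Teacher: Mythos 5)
Your proof is correct and follows essentially the same route as the paper: after reducing to $I=\{1\}\times I_1$, both arguments bound $\sum_{j\in I_1}(\lambda_1-\lambda_j)^2$ by Cauchy--Schwarz on $\sum_{j\in I_1}\lambda_j$, an AM--GM estimate on the cross term, and the constraint $\sum_i\lambda_i^2=1$, yielding the bound $n_0+1$ with the same three equality conditions. The only difference is cosmetic (order of the two elementary inequalities), so nothing further to add.
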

\begin{proof}
Without loss of generality, we can assume that $I=\{1\}\times I_1$
by Lemma 2.1. Then
\begin{eqnarray}   \sum_{(i,j)\in I}[(\lambda_i-\lambda_j)^2-1]&=&\sum_{j\in I_1}(\lambda^2_1+\lambda^2_j-2\lambda_1\lambda_j)-n_0
=n_0\lambda^2_1+\sum_{j\in I_1}\lambda^2_j-2\lambda_1\sum_{j\in
I_1}\lambda_j-n_0\nonumber\\
&\leq&(n_0+1)\lambda^2_1+\sum_{j\in I_1}\lambda^2_j+(\sum_{j\in
I_1}\lambda_j)^2-n_0
\leq(n_0+1)(\lambda^2_1+\sum_{j\in I_1}\lambda^2_j)-n_0\nonumber\\
&\leq&(n_0+1)\sum_{i}\lambda^2_i-n_0=1,\nonumber
\end{eqnarray}
where the equality condition is easily seen from the proof.
\end{proof}
\begin{lem}
$\sum_{\beta\in J_{\alpha}}( \|[\hat{Q}_{\alpha},
\hat{Q}_{\beta}]\|^2-1)\leq 1 $ for any $Q\in O(N)$, $\alpha\in S$
and $J_{\alpha}\subset S$.
\end{lem}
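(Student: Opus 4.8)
The plan is to reduce the statement to the diagonal model handled in Lemmas 2.1--2.3. Fix $Q\in O(N)$, $\alpha\in S$, and $J_\alpha\subset S$. The matrix $\hat{Q}_\alpha\in SM(n)$ is symmetric with $\|\hat{Q}_\alpha\|^2=1$, so by the spectral theorem there is $P\in O(n)$ with $P^t\hat{Q}_\alpha P=\mathrm{diag}(\lambda_1,\dots,\lambda_n)$, where we may order $\lambda_1\ge\cdots\ge\lambda_n$ and $\sum_i\lambda_i^2=1$. First I would observe that conjugation by $P$ is an isometry of $M(n)$ commuting with the bracket, i.e. $\|[P^tAP,P^tBP]\|=\|[A,B]\|$, so the quantity $\|[\hat{Q}_\alpha,\hat{Q}_\beta]\|^2$ is unchanged if we replace each $\hat{Q}_\beta$ by $P^t\hat{Q}_\beta P$. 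Hence, without loss of generality, $\hat{Q}_\alpha$ is the diagonal matrix $D=\mathrm{diag}(\lambda_1,\dots,\lambda_n)$, and $\{P^t\hat{Q}_\beta P\}_{\beta\in S}$ is still an orthonormal basis of $SM(n)$.

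The heart of the argument is the bound $\|[D,B]\|^2 \le \sum_{(i,j)\in I}(\lambda_i-\lambda_j)^2\,\|B\|^2$-type estimate specialized to an orthonormal family. Writing $B_\beta := P^t\hat{Q}_\beta P = (b^\beta_{ij})$, a direct computation gives $\|[D,B_\beta]\|^2 = \sum_{i\ne j}(\lambda_i-\lambda_j)^2 (b^\beta_{ij})^2 = 2\sum_{i<j}(\lambda_i-\lambda_j)^2 (b^\beta_{ij})^2$. Next I would sum over $\beta\in J_\alpha$ and use that $\{B_\beta\}_{\beta\in S}$ is orthonormal, so for each fixed pair $i<j$ the map $B\mapsto \sqrt{2}\,b_{ij}$ is the inner product against the unit vector $\hat{E}_{(i,j)}$; hence $\sum_{\beta\in S}2(b^\beta_{ij})^2 = \|\hat{E}_{(i,j)}\|^2 = 1$, and therefore $\sum_{\beta\in J_\alpha}2(b^\beta_{ij})^2\le 1$. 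Combining,
\[
\sum_{\beta\in J_\alpha}\|[\hat{Q}_\alpha,\hat{Q}_\beta]\|^2
=\sum_{\beta\in J_\alpha}\sum_{i<j}(\lambda_i-\lambda_j)^2\cdot 2(b^\beta_{ij})^2
\le \sum_{i<j}(\lambda_i-\lambda_j)^2.
\]
On the other hand, since $\|\hat{Q}_\beta\|^2=1$, the individual term $\|[\hat{Q}_\alpha,\hat{Q}_\beta]\|^2=\sum_{i<j}(\lambda_i-\lambda_j)^2\cdot 2(b^\beta_{ij})^2\le \max_{i<j}(\lambda_i-\lambda_j)^2$; but more to the point, I want to extract exactly $|J_\alpha|$ copies of $1$ being subtracted, so I should only keep those pairs $(i,j)$ for which $(\lambda_i-\lambda_j)^2>1$ can actually contribute a net positive amount.

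To finish, split each pair: $(\lambda_i-\lambda_j)^2 - 1 \le \big((\lambda_i-\lambda_j)^2-1\big)_+ = \sum_{(i,j)\in I}\big((\lambda_i-\lambda_j)^2-1\big)$ in the notation of Lemma 2.2, where $I=\{(i,j):\lambda_i-\lambda_j>1\}$. For a pair with $(\lambda_i-\lambda_j)^2\le 1$ we have $(\lambda_i-\lambda_j)^2\cdot 2(b^\beta_{ij})^2 \le 2(b^\beta_{ij})^2$, and summing the slack contributions against orthonormality absorbs at most $\sum_{\beta\in J_\alpha}\sum_{(i,j)\notin I}2(b^\beta_{ij})^2$; the clean way is to write, for each $\beta$, $\|[\hat{Q}_\alpha,\hat{Q}_\beta]\|^2 - 1 \le \sum_{(i,j)\in I}\big((\lambda_i-\lambda_j)^2-1\big)\cdot 2(b^\beta_{ij})^2$ using $\|\hat{Q}_\beta\|^2 = \sum_{i\le j}(\text{entries})^2 = 1$ so that $\sum_{i<j}2(b^\beta_{ij})^2\le 1$ and $(\lambda_i-\lambda_j)^2\le 1$ off $I$ makes those terms subtract at least their weight. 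Then sum over $\beta\in J_\alpha$, interchange the order of summation, apply $\sum_{\beta\in J_\alpha}2(b^\beta_{ij})^2\le 1$ for each $(i,j)\in I$, and conclude $\sum_{\beta\in J_\alpha}\big(\|[\hat{Q}_\alpha,\hat{Q}_\beta]\|^2-1\big)\le \sum_{(i,j)\in I}\big((\lambda_i-\lambda_j)^2-1\big)\le 1$, the last step being exactly Lemma 2.2. I expect the main obstacle to be the bookkeeping in this last splitting step—getting the inequality $\|[\hat{Q}_\alpha,\hat{Q}_\beta]\|^2-1\le \sum_{(i,j)\in I}((\lambda_i-\lambda_j)^2-1)\,2(b^\beta_{ij})^2$ to hold termwise in $\beta$ requires carefully using both $\sum_{i\le j}(b^\beta_{ij})^2\cdot(\text{weight})=1$ and the fact that off $I$ the factor $(\lambda_i-\lambda_j)^2-1$ is $\le 0$, so one must check no positive off-$I$ contribution is accidentally dropped; once that termwise bound is in hand, orthonormality and Lemma 2.2 close the argument immediately.
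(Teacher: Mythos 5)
Your proposal is correct and follows essentially the same route as the paper: diagonalize $\hat{Q}_\alpha$ (the paper's ``without loss of generality''), expand $\|[\hat{Q}_\alpha,\hat{Q}_\beta]\|^2-1$ as $\sum_{i,j}((\lambda_i-\lambda_j)^2-1)(\hat{q}^{\beta}_{ij})^2$ using $\|\hat{Q}_\beta\|^2=1$, discard the nonpositive terms off $I$, bound $\sum_{\beta\in J_\alpha}q^2_{\gamma\beta}\le 1$ by orthogonality of $Q$, and finish with Lemma 2.2. The ``bookkeeping'' you worried about is exactly the paper's computation and goes through without difficulty, since every dropped term (diagonal pairs and pairs with $(\lambda_i-\lambda_j)^2\le 1$) carries a nonpositive coefficient.
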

\begin{proof}
Given $\alpha\in S$, without loss of generality, we can assume
$\hat{Q}_{\alpha}=diag(\lambda_1,...,\lambda_n)$ with
$\sum_i\lambda^2_i=1$ and $\lambda_1\geq...\geq\lambda_n$. Then by
Lemma 2.2,
\begin{eqnarray}
\sum_{\beta\in J_{\alpha}}( \|[\hat{Q}_{\alpha},
\hat{Q}_{\beta}]\|^2-1)&=&\sum_{\beta\in
J_{\alpha}}\sum^n_{i,j=1}((\lambda_i-\lambda_j)^2-1)(\hat{q}^{\beta}_{ij})^2=\sum_{\beta\in
J_{\alpha}}\sum_{(i,j)=\gamma\in
S}((\lambda_i-\lambda_j)^2-1)q^2_{\gamma\beta}\nonumber\\
&\leq&\sum_{(i,j)=\gamma\in
I}((\lambda_i-\lambda_j)^2-1)\sum_{\beta\in
J_{\alpha}}q^2_{\gamma\beta}\leq\sum_{(i,j)\in
I}((\lambda_i-\lambda_j)^2-1)\leq 1.\nonumber
\end{eqnarray}
\end{proof}
\begin{lem}
$\sum_{\beta\in S}\|[\hat{Q}_{\alpha}, \hat{Q}_{\beta}]\|^2\leq n$
for any $Q\in O(N)$, $\alpha\in S$.
\end{lem}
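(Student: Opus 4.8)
The plan is to reuse the opening moves from the proof of Lemma 2.4, but to drop the restriction to the subset $I$ (or $J_\alpha$) and simply sum over all of $S$. First I would observe that $\sum_{\beta\in S}\|[\hat{Q}_{\alpha},\hat{Q}_{\beta}]\|^2$ is unchanged if every $\hat{Q}_{\beta}$ is replaced by $P\hat{Q}_{\beta}P^t$ for a fixed $P\in O(n)$, since $[P A P^t,P B P^t]=P[A,B]P^t$, conjugation by $P$ is an isometry of $SM(n)$, and it sends the orthonormal basis $\{\hat{Q}_{\beta}\}$ to another orthonormal basis (hence corresponds to another $Q\in O(N)$). So, exactly as in Lemma 2.4, there is no loss of generality in assuming $\hat{Q}_{\alpha}=\mathrm{diag}(\lambda_1,\dots,\lambda_n)$ with $\sum_i\lambda_i^2=1$. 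Then for each $\beta$ one has $[\hat{Q}_{\alpha},\hat{Q}_{\beta}]_{ij}=(\lambda_i-\lambda_j)\hat{q}^{\beta}_{ij}$, so that
\[\|[\hat{Q}_{\alpha},\hat{Q}_{\beta}]\|^2=\sum_{i,j=1}^n(\lambda_i-\lambda_j)^2(\hat{q}^{\beta}_{ij})^2=\sum_{(i,j)=\gamma\in S}(\lambda_i-\lambda_j)^2\,q^2_{\gamma\beta},\]
the last equality using $\hat{q}^{\beta}_{ij}=\tfrac{1}{\sqrt2}q_{\gamma\beta}$ for $i<j$, $\hat{q}^{\beta}_{ii}=q_{\gamma\beta}$, and the vanishing of the coefficient when $i=j$ (this is precisely the identity already appearing in the proof of Lemma 2.4).

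Next I would sum over $\beta\in S$ and interchange the order of summation:
\[\sum_{\beta\in S}\|[\hat{Q}_{\alpha},\hat{Q}_{\beta}]\|^2=\sum_{(i,j)=\gamma\in S}(\lambda_i-\lambda_j)^2\Big(\sum_{\beta\in S}q^2_{\gamma\beta}\Big).\]
Since $Q\in O(N)$, every row of $Q$ is a unit vector, hence $\sum_{\beta\in S}q^2_{\gamma\beta}=1$ for each $\gamma$, and the right-hand side reduces to $\sum_{(i,j)\in S}(\lambda_i-\lambda_j)^2=\sum_{1\le i<j\le n}(\lambda_i-\lambda_j)^2$. Finally I would evaluate this elementary symmetric sum:
\[\sum_{1\le i<j\le n}(\lambda_i-\lambda_j)^2=\tfrac12\sum_{i,j=1}^n(\lambda_i-\lambda_j)^2=n\sum_i\lambda_i^2-\Big(\sum_i\lambda_i\Big)^2=n-\Big(\sum_i\lambda_i\Big)^2\le n,\]
which is the desired bound.

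I do not expect any real obstacle here: once $\hat{Q}_{\alpha}$ is diagonalized, the argument is forced, and the only points requiring care are the bookkeeping with the factor $\tfrac1{\sqrt2}$ in the definition of $\hat{q}^{\beta}_{ij}$ for $i<j$ (this is what makes the coefficient in the first display come out as $q^2_{\gamma\beta}$ and not $2q^2_{\gamma\beta}$), and the fact that it is the \emph{row} norms of $Q$ that are used — though for an orthogonal matrix these equal the column norms, both being $1$. As a byproduct one sees the bound is sharp: equality holds if and only if $\sum_i\lambda_i=0$, i.e. $\hat{Q}_{\alpha}$ is traceless.
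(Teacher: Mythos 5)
Your proof is correct, but it follows a different path from the paper's. The paper proves this lemma entirely in the fixed basis $\{\hat{E}_{\gamma}\}$: it expands $\hat{Q}_{\alpha}=\sum_{\gamma}q_{\gamma\alpha}\hat{E}_{\gamma}$, invokes the precomputed identity $(2.3)$, $\sum_{\tau}\langle[\hat{E}_{\gamma},\hat{E}_{\tau}],[\hat{E}_{\xi},\hat{E}_{\tau}]\rangle=n\delta_{\gamma\xi}-\delta_{\gamma}\delta_{\xi}$, and obtains the exact value $n\sum_{\gamma}q^2_{\gamma\alpha}-(\sum_i\hat{q}^{\alpha}_{ii})^2=n-(\mathrm{Tr}\,\hat{Q}_{\alpha})^2$. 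You instead borrow the diagonalization device from the proof of Lemma 2.3 (which is the lemma you mean when you say ``the opening moves from the proof of Lemma 2.4''): conjugate so that $\hat{Q}_{\alpha}=\mathrm{diag}(\lambda_1,\dots,\lambda_n)$, reduce the sum via row-orthonormality of $Q$ to $\sum_{i<j}(\lambda_i-\lambda_j)^2=n-(\sum_i\lambda_i)^2$. The two computations of course agree, since the trace is conjugation-invariant, and both make the equality condition ($\hat{Q}_{\alpha}$ traceless) visible. Your route has the merit of not needing identity $(2.3)$ at all and of being uniform with Lemma 2.3's setup; the paper's route is shorter once $(2.3)$ is in hand and avoids the (correctly justified, but not free) step of checking that conjugation by $P\in O(n)$ carries the orthonormal basis $\{\hat{Q}_{\beta}\}$ to another one indexed by some element of $O(N)$. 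All the delicate points you flag --- the factor $\tfrac{1}{\sqrt{2}}$ bookkeeping and the use of row rather than column norms of $Q$ --- are handled correctly.
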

\begin{proof}
It follows from $(2.3)$ that
\begin{eqnarray}
\sum_{\beta\in S}\|[\hat{Q}_{\alpha},
\hat{Q}_{\beta}]\|^2&=&\sum_{\beta\gamma\tau\xi\eta}q_{\gamma\alpha}q_{\xi\alpha}q_{\tau\beta}q_{\eta\beta}\langle\hskip
0.1cm [\hat{E}_{\gamma}, \hat{E}_{\tau}],\hskip 0.1cm
[\hat{E}_{\xi}, \hat{E}_{\eta}]\hskip 0.1cm \rangle
=\sum_{\gamma\xi}q_{\gamma\alpha}q_{\xi\alpha}\sum_{\tau}\langle\hskip
0.1cm [\hat{E}_{\gamma}, \hat{E}_{\tau}],\hskip 0.1cm
[\hat{E}_{\xi}, \hat{E}_{\tau}]\hskip 0.1cm \rangle\nonumber\\
&=&\sum_{\gamma\xi}q_{\gamma\alpha}q_{\xi\alpha}(n\delta_{\gamma\xi}-\delta_{\gamma}\delta_{\xi})
=n\sum_{\gamma}q^2_{\gamma\alpha}-(\sum_i\hat{q}^{\alpha}_{ii})^2\leq
n.\nonumber
\end{eqnarray}
\end{proof}
\vskip 0.05cm Now let $\varphi : M(m,n)\longrightarrow
M(C_m^2,C_n^2)$ be the map defined by
$\varphi(A)_{(i,j)(k,l)}:=A(_{i\hskip 0.1cm j}^{k\hskip 0.1cm l})$,
where $1\leq i<j\leq m$, $1\leq k<l\leq n$ and $A(_{i\hskip 0.1cm
j}^{k\hskip 0.1cm l})=a_{ik}a_{jl}-a_{il}a_{jk}$ is the discriminant
of the sub-matrix of $A$ with the rows $i, j$, the columns $k, l$,
arranged with the same ordering as in $(2.1)$. It's easily seen that
$\varphi(I_n)=I_{C_n^2}$, $\varphi(A)^t=\varphi(A^t)$ and the
following
\begin{lem} The map $\varphi$ preserves the matrix product, i.e.
 $\varphi(AB)=\varphi(A)\varphi(B)$ holds for $A\in M(m,k)$, $B\in
M(k,n)$.
\end{lem}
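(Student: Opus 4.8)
The plan is to recognize Lemma~2.6 as nothing more than the Cauchy--Binet formula applied to $2\times 2$ minors, and to prove it by a direct entrywise expansion. Write $A=(a_{ip})\in M(m,k)$ and $B=(b_{pr})\in M(k,n)$, and fix row indices $1\le i<j\le m$ and column indices $1\le r<s\le n$. By the definition of $\varphi$, the $\bigl((i,j),(r,s)\bigr)$ entry of $\varphi(AB)$ is the discriminant of the $2\times 2$ submatrix of $AB$ sitting in rows $i,j$ and columns $r,s$, namely $(AB)_{ir}(AB)_{js}-(AB)_{is}(AB)_{jr}$. So the whole lemma reduces to showing that this quantity equals $(\varphi(A)\varphi(B))_{(i,j)(r,s)}$ for all admissible $(i,j),(r,s)$.

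The first step is to substitute $(AB)_{ir}=\sum_{p=1}^{k}a_{ip}b_{pr}$ (and similarly for the other three factors) and expand, which gives $\sum_{p,q=1}^{k}a_{ip}a_{jq}\,(b_{pr}b_{qs}-b_{ps}b_{qr})$. The second step is to split this double sum according to whether $p=q$ or $p\neq q$: the diagonal terms $p=q$ vanish identically since $b_{pr}b_{ps}-b_{ps}b_{pr}=0$, while the off-diagonal terms pair up, the pair $\{p,q\}$ with $p<q$ contributing $(a_{ip}a_{jq}-a_{iq}a_{jp})(b_{pr}b_{qs}-b_{ps}b_{qr})=A\bigl({}^{p\,q}_{i\,j}\bigr)\,B\bigl({}^{r\,s}_{p\,q}\bigr)=\varphi(A)_{(i,j)(p,q)}\,\varphi(B)_{(p,q)(r,s)}$. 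Summing over all $p<q$ in $\{1,\dots,k\}$, which under the ordering $(2.1)$ is exactly the common index set labelling the columns of $\varphi(A)$ and the rows of $\varphi(B)$, yields precisely $(\varphi(A)\varphi(B))_{(i,j)(r,s)}$. Since this holds for every choice of $(i,j)$ and $(r,s)$, we conclude $\varphi(AB)=\varphi(A)\varphi(B)$.

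I do not expect a genuine obstacle here; the content is a bookkeeping verification of Cauchy--Binet in its lowest nontrivial instance. The only points demanding a little care are the vanishing of the $p=q$ terms and checking that combining the ordered pairs $(p,q)$ and $(q,p)$ reproduces the product of $2\times 2$ minors with the correct sign, together with confirming that the index-ordering convention $(2.1)$ is applied consistently to both factors, so that the right-hand side really is the intended matrix product in $M(C_m^2,C_n^2)$ times $M(C_k^2,\dots)$. The auxiliary identities $\varphi(I_n)=I_{C_n^2}$ and $\varphi(A)^t=\varphi(A^t)$, already noted in the text, are immediate from the definition and will be used freely in what follows.
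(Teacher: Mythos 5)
Your proof is correct: it is the standard Cauchy--Binet verification for $2\times 2$ minors (expand the entries of $AB$, kill the diagonal terms $p=q$, and pair $(p,q)$ with $(q,p)$ to recover the product of minors), and the sign bookkeeping and index conventions all check out. The paper itself states this lemma without proof, so your argument simply supplies the omitted routine verification.
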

\section{Proof of the main results}
\hskip 0.5cm Let $B_1,...,B_m$ be any real symmetric $n\times n$
matrices. Their coefficients under the standard basis
$\{\hat{E}_{\alpha}\}_{\alpha\in S}$ of $SM(n)$ are determined by a
matrix $B\in M(N,m)$ as $(B_1,...,B_m)=(\hat{E}_1,...,\hat{E}_N)B$.
Taking the same ordering as in $(2.1)$ for $1\leq r<s\leq m$ and
$1\leq \alpha<\beta\leq N$, we arrange $\{[B_r, B_s]\}_{r<s}$,
$\{[\hat{E}_{\alpha}, \hat{E}_{\beta}]\}_{\alpha<\beta}$ into
$C_m^2$, $C_N^2$ vectors respectively. We first observe that
\[([B_1, B_2],...,[B_{m-1}, B_m])=([\hat{E}_1, \hat{E}_2],...,[\hat{E}_{N-1}, \hat{E}_N])\cdot \varphi(B).\]
\vskip 0.05cm Let $C(E)$ denote the matrix in $M(C_N^2)$ defined by
$C(E)_{(\alpha,\beta)(\gamma,\tau)}:=\langle\hskip 0.1cm
[\hat{E}_{\alpha}, \hat{E}_{\beta}],\hskip 0.1cm [\hat{E}_{\gamma},
\hat{E}_{\tau}]\hskip 0.1cm \rangle$, for $1\leq\alpha<\beta\leq N$,
$1\leq\gamma<\tau\leq N$. Moreover we will use the same notation for
$\{B_r\}$ and $\{\hat{Q}_{\alpha}\}$, i.e. $C(B)$ and $C(Q)$
respectively. Then it's obvious that
\[C(B)=\varphi(B^t)C(E)\varphi(B), \hskip 0.3cm C(Q)=\varphi(Q^t)C(E)\varphi(Q).\]
Since $BB^t$ is a semi-positive definite matrix in $SM(N)$, there
exists an orthogonal matrix $Q\in SO(N)$ such that $BB^t=Q\hskip
0.1cm diag(x_1,...,x_N)\hskip 0.1cm Q^t$ with $x_{\alpha}\geq 0$,
$1\leq\alpha\leq N.$ Thus
\[\sum_{r=1}^m\|B_r\|^2=\|B\|^2 =\sum_{\alpha=1}^Nx_{\alpha}\] and
hence by Lemma 2.5
\begin{eqnarray}
\sum_{r,s=1}^m\|[B_r, B_s]\|^2&=&2Tr\hskip 0.1cm C(B)=2Tr\hskip
0.1cm \varphi(B^t)C(E)\varphi(B)=2Tr\hskip 0.1cm
\varphi(BB^t)C(E)\nonumber\\&=&2Tr\hskip 0.1cm
\varphi(diag(x_1,...,x_N))C(Q)=\sum_{\alpha,\beta=1}^Nx_{\alpha}x_{\beta}\|[\hat{Q}_{\alpha},
\hat{Q}_{\beta}]\|^2.
\end{eqnarray}
\vskip 0.05cm We are now ready to prove Theorem 1.1. \\
\begin{bf}
 Proof of Theorem 1.1.
 \end{bf}
For the inequality, by the arguments above it is equivalent to prove
\begin{equation}
\sum_{\alpha,\beta=1}^Nx_{\alpha}x_{\beta}\|[\hat{Q}_{\alpha},
\hat{Q}_{\beta}]\|^2\leq (\sum_{\alpha=1}^Nx_{\alpha})^2, \hskip
0.5cm for\hskip 0.1cm any\hskip 0.1cm x\in \mathbb{R}^N_{+} \hskip
0.1cm and \hskip 0.1cm Q\in SO(N),
\end{equation}
 where
$\mathbb{R}^N_{+}:=\{0\neq x=(x_1,...,x_N)\in
\mathbb{R}^N|x_{\alpha}\geq 0, 1\leq\alpha\leq N\}$ is the cone
spanned by the positive axes of $\mathbb{R}^N$. \vskip 0.05cm Let
$f_Q(x)=F(x,Q):=\sum_{\alpha,\beta=1}^Nx_{\alpha}x_{\beta}\|[\hat{Q}_{\alpha},
\hat{Q}_{\beta}]\|^2-(\sum_{\alpha=1}^Nx_{\alpha})^2$. Then $F$ is a
continuous function defined on $\mathbb{R}^N\times SO(N)$ and thus
uniformly continuous on any compact subset of $\mathbb{R}^N\times
SO(N)$. Let
$\bigtriangleup:=\{x\in\mathbb{R}^N_{+}|\sum_{\alpha}x_{\alpha}=1\}$
and for any sufficiently small $\varepsilon>0$,
$\bigtriangleup_{\varepsilon}:=\{x\in \bigtriangleup|x_{\alpha}\geq
\varepsilon, 1\leq\alpha\leq N\}$, and let $G:=\{Q\in
SO(N)|f_Q(x)\leq 0, \forall x\in \bigtriangleup\}$,
$G_{\varepsilon}:=\{Q\in SO(N)|f_Q(x)< 0, \forall x\in
\bigtriangleup_{\varepsilon}\}$. We claim that
\[G=\lim_{\varepsilon\rightarrow 0}G_{\varepsilon}=SO(N).\]
Note that this implies $(3.2)$ and thus proves the inequality. In
fact we can show
\begin{equation}
G_{\varepsilon}=SO(N) \hskip 0.5cm for \hskip 0.1cm any \hskip 0.1cm
sufficiently \hskip 0.1cm small \hskip 0.1cm \varepsilon>0.
\end{equation}
To prove $(3.3)$, we use the continuity method which consists of the
following three steps:
\\
\underline{step1}: \hskip 0.3cm $I_N\in G_{\varepsilon}$,  (thus $G_{\varepsilon}\neq\emptyset$),\\
\underline{step2}: \hskip 0.3cm $G_{\varepsilon}$ is open in $SO(N)$, \\
\underline{step3}: \hskip 0.3cm $G_{\varepsilon}$ is closed in
$SO(N)$. \vskip 0.05cm Since $F$ is uniformly continuous on
$\triangle_{\varepsilon}\times SO(N)$, step2 is obvious.\\
\underline{proof of step1}: \hskip 0.5cm  Now for any $x\in
\bigtriangleup_{\varepsilon}$,
$f_{I_N}(x)=\sum_{\alpha,\beta=1}^Nx_{\alpha}x_{\beta}\|[\hat{E}_{\alpha},
\hat{E}_{\beta}]\|^2-1$. \vskip 0.05cm It follows from $(2.2)$ that
\begin{eqnarray}
f_{I_N}(x)&=&2\{\sum_{i<j}(x_{ii}x_{ij}+x_{ij}x_{jj})+\frac{1}{2}\sum_{i<j<k}(x_{ij}x_{jk}+x_{ij}x_{ik}+x_{ik}x_{jk})\}-1\nonumber\\
&=&2\sum_{i<j}(x_{ii}x_{ij}+x_{ij}x_{jj})+\sum_{i<j<k}(x_{ij}x_{jk}+x_{ij}x_{ik}+x_{ik}x_{jk})-(\sum_{i\leq
j}^Nx_{ij})^2\nonumber\\
&<&0.\nonumber
\end{eqnarray}
which means $I_N\in G_{\varepsilon}$.\\
\underline{proof of step3}: We only need to prove the following a
priori estimate:\\ {\bf{a priori estimate}}: Suppose $f_Q(x)\leq 0$,
for every $ x\in\bigtriangleup_{\varepsilon}$. Then $f_Q(x)< 0$, for
every $
x\in\bigtriangleup_{\varepsilon}$. \\
(proof of the a priori estimate) \hskip 0.5cm If there's a point
$y\in \bigtriangleup_{\varepsilon}$ such that $f_Q(y)= 0$, without
loss of generality, we can assume $y\in
\bigtriangleup^{\gamma}_{\varepsilon}:=\{x\in\bigtriangleup_{\varepsilon}|x_{\alpha}>\varepsilon,
for\hskip 0.1cm \alpha\leq \gamma, x_{\beta}=\varepsilon, for\hskip
0.1cm \beta>\gamma\}$ for some $1\leq \gamma\leq N$. Then $y$ is a
maximum point of $f_Q(x)$ in the cone spanned by
$\bigtriangleup_{\varepsilon}$ and an interior maximum point in
$\bigtriangleup^{\gamma}_{\varepsilon}$. Therefore, there exist some
numbers $b_{\gamma+1},...,b_N$ and a number $a$ such that
\begin{equation}
\begin{cases}
(\frac{\partial f_Q}{\partial x_1}(y),...,\frac{\partial
f_Q}{\partial x_{\gamma}}(y))=2a(1,...,1),\\
(\frac{\partial f_Q}{\partial x_{\gamma+1}}(y),...,\frac{\partial
f_Q}{\partial x_{N}}(y))=2(b_{\gamma+1},...,b_N),
\end{cases}
\end{equation}
or equivalently
\begin{equation}
\sum_{\beta=1}^Ny_{\beta}(\|[\hat{Q}_{\alpha},
\hat{Q}_{\beta}]\|^2)-1=
\begin{cases}
 a,\hskip 0.7cm \alpha\leq\gamma,\\
b_{\alpha}, \hskip 0.5cm \alpha>\gamma,
\end{cases}
\end{equation}
and hence
\[
f_Q(y)=(\sum_{\alpha=1}^{\gamma}y_{\alpha})a+(\sum_{\alpha=\gamma+1}^Nb_{\alpha})\varepsilon
=0,\hskip 0.5cm
\sum_{\alpha=1}^{\gamma}y_{\alpha}+(N-\gamma)\varepsilon=1.
\]
Meanwhile, one can see that $\frac{\partial f_Q}{\partial
\nu}(y)=2(a\gamma +\sum_{\alpha=\gamma+1}^Nb_{\alpha})\leq 0$, where
$\nu=(1,...,1)$ is the normal vector of $\bigtriangleup$ in
$\mathbb{R}^N$. For any sufficiently small $\varepsilon$ (such as
$\varepsilon<\frac{1}{N}$), it follows from the above three formulas
that $a\geq 0$. Without loss of generality, we assume
$y_1=max\{y_1,...,y_{\gamma}\}>\varepsilon$ and let $J:=\{\beta\in
S| \|[\hat{Q}_{1}, \hat{Q}_{\beta}]\|^2\geq 1\}$, $n_1$ be the
number of elements of $J$.
 Now combining Lemma $2.3$ , Lemma $2.4$ and formula $(3.5)$ will give a
contradiction as following:
\begin{eqnarray}
1\leq 1+a&=&\sum_{\beta=2}^Ny_{\beta}\|[\hat{Q}_1,
\hat{Q}_{\beta}]\|^2
=\sum_{\beta\in J}y_{\beta}(\|[\hat{Q}_1,
\hat{Q}_{\beta}]\|^2-1)+\sum_{\beta\in J}y_{\beta}+\sum_{\beta\in
S/J}y_{\beta}\|[\hat{Q}_1, \hat{Q}_{\beta}]\|^2\nonumber\\
&\leq&y_1\sum_{\beta\in J}(\|[\hat{Q}_1,
\hat{Q}_{\beta}]\|^2-1)+\sum_{\beta\in J}y_{\beta}+\sum_{\beta\in
S/J}y_{\beta}\|[\hat{Q}_1,
\hat{Q}_{\beta}]\|^2\nonumber\\
&\leq&y_1+\sum_{\beta\in J}y_{\beta}+\sum_{\beta\in
S/J}y_{\beta}\|[\hat{Q}_1, \hat{Q}_{\beta}]\|^2
\leq\sum_{\beta=1}^Ny_{\beta}=1,
\end{eqnarray}
thus
\begin{equation}
y_{\beta}=y_1,\hskip 0.2cm for \hskip 0.1cm \beta\in J,\hskip 0.5cm
\sum_{\beta\in J}\|[\hat{Q}_1, \hat{Q}_{\beta}]\|^2=n_1+1\leq n<N,
\end{equation}
hence \hskip 0.3cm $S/(J\cup\{1\})\neq\emptyset$ and the last
inequality in formula $(3.6)$ should be strictly less because of the
definition of $J$ and the positivity of $y_{\beta}$ for $\beta\in
S/(J\cup\{1\})$.  \vskip 0.05cm Now we come to consider the equality
condition of Conjecture $1$ in the sight of the proof of the a
priori estimate. \vskip 0.05cm If there's a point $y\in
\bigtriangleup$ such that $f_Q(y)= 0$, without loss of generality,
we can assume $y\in
\bigtriangleup^{\gamma}:=\{x\in\bigtriangleup|x_{\alpha}>0,
\forall\alpha\leq \gamma, x_{\beta}=0, \forall\beta>\gamma\}$ for
some $1\leq \gamma\leq N$. Then $y$ is a maximum point of $f_Q(x)$
in $\mathbb{R}^N_{+}$ and an interior maximum point in
$\bigtriangleup^{\gamma}$. Therefore, we have the same conclusions
as $(3.4)$, $(3.5)$, $(3.6)$, $(3.7)$ when $\gamma=n_1+1.$ From
formula $(3.7)$, Lemma $2.1$, Lemma $2.2$ and the proof of Lemma
$2.3$, we know that all $\hat{Q}_{\beta}$ for $\beta\in J\cup\{1\}$
have the same rank 2 and $n_0=n_1=1$. Thus $\gamma=2$ and the
equality case of Lemma $2.2$ and Lemma $2.3$ tell us these two
matrices must be in the forms given in Theorem $1.1$.\\
\begin{bf}
 Proof of Corollary 1.2.
 \end{bf}
When $H\neq 0$, we can choose an orthonormal basis $\{u_1,...,u_m\}$
of $T^{\perp}_pM$ such that $u_1=\frac{H}{|H|}$. When $H=0$, the
basis $\{u_r\}$ can be chose arbitrarily. Put $B_1=A_{u_1}-|H|I_n$,
$B_r=A_{u_r}$ for $2\leq r\leq m$. Then the conclusions follow from
Theorem $1.1$ and [DFV].

\end{document}